\patchcmd{\section}{\scshape}{\bfseries}{}{}
\renewcommand{\@secnumfont}{\bfseries}
\def\RR{\mathbb{R}}
\def\NN{\mathbb{N}}
\def\HH{\mathfrak{Q}}
\def\T{\mathfrak{T}}
\def\cc#1{\{#1\}}
\def\pp#1{\|#1\|}
\def\ra{\rangle}
\def\la{\langle}
\def\nh{\mathcal{H}}
\def\H{V_R(\HH)}
\def\K{W_R(\HH)}
\def\cc#1{\{#1\}}
\def\pp#1{\|#1\|}
\def\ra{\rangle}
\def\la{\langle}
\theoremstyle{plain}
\newtheorem{theorem}{\bf Theorem}[section]
\theoremstyle{remark}
\newtheorem{definition}[theorem]{\bf Definition}
\newtheorem{corollary}[theorem]{\bf Corollary}
\title[R\lowercase{iesz bases in quaternionic} H\lowercase{ilbert Spaces}]{Riesz bases  in quaternionic Hilbert Spaces}
\author[ S\lowercase{harma}, V\lowercase{irender} \lowercase{and} K\lowercase{aushik}]
{S.K. S\lowercase{harma}$^\dag$, V\lowercase{irender}$^\S$ \lowercase{and} S.K. K\lowercase{aushik}$^\ddag$ \bigskip \\
	$^{\dag, \ddag}$D\lowercase{epartment} \lowercase{of} M\lowercase{athematics},\\
	K\lowercase{irori} M\lowercase{al} C\lowercase{ollege},\\
	U\lowercase{niversity of} D\lowercase{elhi}, D\lowercase{elhi~110~007}, INDIA.\\
		\emph{$^\dag$}\lowercase{sumitkumarsharma@gmail.com}\\
	\emph{$^\ddag$}\lowercase{shikk2003@yahoo.co.in}\\ 
	\emph{$^\S$}D\lowercase{epartment} \lowercase{of} M\lowercase{athematics},\\
	R\lowercase{amjas}  C\lowercase{ollege},\\
	U\lowercase{niversity of} D\lowercase{elhi}, D\lowercase{elhi~110~007}, INDIA.\\
\emph{$^\ddag$}\lowercase{virender57@yahoo.com}}
\subjclass[2010]{42C15, 42A38} \keywords{Frames, Quaternionic Hilbert spaces.} \thispagestyle{empty}
\begin{document}
	\maketitle \baselineskip14pt



 \baselineskip12pt
\begin{abstract}
In this article, we introduce and study Riesz bases in a separable quaternionic Hilbert spaces. Some results on Riesz bases in a separable quaternionic Hilbert spaces are proved. It is also proved that a Riesz basis in a separable quaternionic Hilbert space  a frame for the quaternionic Hilbert space. Riesz sequences are defined and equivalence of a Riesz basis and a complete Riesz sequence in a separable quaternionic Hilbert space is proved.
 \end{abstract}\baselineskip14pt


\section{Introduction}
\def\xmn{\cc{u_i}_{i\in \NN}}
\def\hmn{\cc{\nh_{n,i}}_{i=1,2, \cdots, m_n \atop{n\in \NN}}}
\def\h2n{\cc{\nh_{n,i}}_{i=1,2, \cdots, 2n \atop{n\in \NN}}}
\def\tmn{\cc{\mathfrak{T}_{n,i}}_{i=1,2, \cdots, m_n \atop{n\in \NN}}}
\def\Rmn{\cc{\mathfrak{R}_{n,i}: \nh \to \nh_{n,i}}_{i=1,2, \cdots, m_n \atop{n\in \NN}}}
\def\Tmn{\cc{\mathfrak{T}_{n,i}: \nh \to \nh_{n,i}}_{i=1,2, \cdots, m_n \atop{n\in \NN}}}
\def\Umn{\cc{\mathfrak{U}_{n,i}: \nh \to \nh_{n,i}}_{i=1,2, \cdots, m_n \atop{n\in \NN}}}
\def\Tnn{\cc{\mathfrak{T}_{n,i}: \nh \to \nh_{n,i}}_{i=1,2, \cdots, 2n \atop{n\in \NN}}}
\def\ymn{\cc{v_i}_{i\in\NN}}
\def\xxmn{\cc{\la x, x_{n,i}\ra}_{i=1,2, \cdots, m_n \atop{n\in \NN}}}
\def\alphamn{\cc{\alpha_{n,i}}_{i=1,2, \cdots, m_n \atop{n\in \NN}}}
\def\betamn{\cc{\beta_{n,i}}_{i=1,2, \cdots, m_n \atop{n\in \NN}}}
\def\llim{\lim\limits_{n\to \infty}}
\def\S{{S}}
\def\T{\mathfrak{T}_{\cc{m_n}}}
\def\suml{\sum\limits_{i=1}^{\infty}}
\def\mnsum{\sum\limits_{i=1}^{\infty}}
\def\mpsum{\sum\limits_{i=1}^{m_p}}
\def\mqsum{\sum\limits_{i=1}^{m_q}}
\def\elii{\ell^2(\HH)}
\def\sumi{\sum\limits_{n\in \NN}}

Frames for Hilbert spaces, which plays an important role in many applications, were introduced way back in  1952 by Duffin and Schaeffer \cite{DS}
 as a tool to study of non-harmonic Fourier series. Duffin and Schaeffer introduced frames for particular Hilbert spaces of the form $L^2[a,b]$. They defined a frame as

``A sequence $\{x_n\}_{n\in\NN}$ in a Hilbert space $\nh$ is said to be a \emph{frame}
for $\nh$ if there exist constants $A$ and $B$ with $0<A\le B<\infty$
such that
\begin{eqnarray}
A\|x\|^2\le \sum\limits_{n=1}^\infty |\langle
x,x_n\rangle|^2\le B\|x\|^2, \ \ \text{for all} \ x\in \nh."
\end{eqnarray}
Moreover, the positive constants $A$ and $B$, respectively, are called \textit{lower frame bound}
and\textit{upper frame bound}, respectively, for the frame $\cc{x_n}_{n\in\NN}$. Collectively, these are referred as \textit{frame bounds} for the frame $\cc{x_n}_{n\in\NN}$. The inequality
$(1.1)$ is called the \emph{frame inequality} for the frame
$\{x_n\}_{n\in\NN}$. A sequence $\cc{x_n}_{n\in\NN} \subset \nh$ is called a \emph{Bessel sequence} if it satisfies
upper frame inequality in $(1.1)$ i.e. it has upper bound which satisfies the inequality.
A frame $\cc{x_n}_{n\in\NN}$ in $\nh$ is said to be
\begin{itemize}[leftmargin=.5in]
\item \emph{{tight}} if it is possible to choose  $A,\ B$ with $A=B$ satisfying inequality $(1.1)$.
\item \emph{Parseval} if it is possible to choose  $A,\ B$ with $A=B=1$ satisfying inequality $(1.1)$.
\item \emph{exact} if removal of any  $x_n$ leaves the collection $\cc{x_i}_{i\ne n}$ no longer a frame for $\nh$.
\end{itemize}

\medskip

 After more than thirty years, in 1986, Daubechies, Grossmann
and Meyer \cite{DGM}, while studying frames, observed that frames can be used to approximate functions in $L^2({\RR})$. In abstract setting, they observed that a function in $L^2({\RR})$ can be represented as a series in terms of elements of frames, which is similar to bases. Then one can also consider
frames as one of the generalizations of orthonormal bases in Hilbert spaces as  redundant
  frames expansions are more useful and advantageous over basis expansions in a variety of practical
   applications. Now a days, frames are regarded as
an important and integral tool  to study various areas of applications like representation
of signals, characterization of function spaces and other fields
of applications such as signal and image processing \cite{CD}, filter bank theory \cite{BHF},
wireless communications \cite{HP} and sigma-delta quantization \cite{BPY}. For more literature on
frame theory, one may refer to \cite{C1,CH2}.

In recent years, Khokulan, Thirulogasanthar and Srisatkunarajah \cite{KTS} introduced and studied frames for finite dimensional quaternionic Hilbert spaces. Sharma and Virender \cite{SV} study some different types of dual frames of a given frame in a finite dimensional   quaternionic Hilbert space and gave various types of reconstructions with the help of  dual frame. Currently, a lot of work is being carried out in quaternionic Hilbert spaces related to the theory of frames. It is noted that the quaternionic Hilbert spaces are frequrntly used in applied physical sciences especially in pyhsics.
\def\nh{V_R(\HH)}

\subsection{Motivation and Recent Work} 
Frames are used in the study which deals with the  construction of a vector from the elements of the frames. Frames produce the construction of a vector just like the bases produce, but with the relaxation of the uniqueness of the coefficients. Thus frames are used as generalizations of bases. Very recently, Sharma and Goel \cite{SS} introduced and studied frames for separable quaternionic Hilbert spaces  and  Chen, Dang and Qian \cite{CDQ} had studied frames for Hardy
spaces in the contexts of the quaternionic space and the Euclidean space in the Clifford algebra.  Sharma and Virender \cite{SV} study some different types of dual frames of a given frame in a finite dimensional   quaternionic Hilbert space covering the dual part of frames in quaternionic Hilbert spaces. One of the classes of frames is a bit stronger and nicer for the purpose of the computations. Frames of this class are referred to as Riesz frames which are somewhat like Riesz bases. These frames are nicer for computations in applications and are realted closely with the theory of bases. In the present paper, we explore this aspects of Riesz bases in quaternionic Hilbert spaces and try to list the concepts related to Riesz bases in the setup of quaternionic Hilbert spaces. It is arranged in simple and systematic manner which may be referred or may be used whenever there is a requirement.

\subsection {Outline} The paper is organized as follows: In section 2, we give notations and terminology used in the context of quaternionic Hilbert spaces together with some basic theory related to quaternionic Hilbert spaces. In section 2.1, the notations and terminology is given. In section 2.2, some basic definitions and some basic introduction of quaternionic Hilbert spaces are provided. Some fundamental results of functional analysis are listed in the context of quaternionic Hilbert spaces. Section 2.3, frames in quaternionic Hilbert spaces,  comprise the definition of frames and the results related to frames in quaternionic Hilbert spaces. Finally,  Section 3 is devoted to the concept of the Riesz bases in separable quaternionic Hilbert spaces. The basic results related to Riesz bases are listed and proved in this section. This section covers the important aspects of Riesz bases in quaternionic Hilbert spaces. Proper references are provided at the end.

\def\nh{V_R(\HH)}
	
\section{Preliminaries}
\setcounter{equation}{0}
\fontsize{12}{14}

\subsection{Notation and Terminology} Since the quaternions are non-commutative in nature, we have two different types of quaternionic Hilbert spaces, the left  quaternionic Hilbert space and the right quaternionic Hilbert space depending on positions of quaternions. 

Throughout this paper, we will denote $\mathfrak{Q}$ to be a non-commutative field of quaternions,  $I$ be a non empty countable set of indicies, $\H$ be a separable right quaternionic Hilbert space,  by the	term ``right linear operator", we mean a ``right $\HH$-linear operator" and $\mathfrak{B}(\H)$ denotes the set of all bounded (right $\HH$-linear) operators of $\H$:
\begin{eqnarray*}
\mathfrak{B}(\H) := \{T : \H\rightarrow\H: \|T\|<\infty\}.
\end{eqnarray*}

\subsection{Quaternionic Hilbert space}\label{2.2}

The non-commutative field of quaternions $\mathfrak{Q}$ is a four dimensional real algebra with unity. In $\HH$, $0$ denotes the null element and $1$ denotes the identity with respect to multiplication. It also includes three so-called imaginary units, denoted by $i,j,k$. i.e.,
\begin{eqnarray*}
\mathfrak{Q}=\cc{x_0+x_1i +x_2j +x_3k \ :\ x_0,\ x_1,\ x_2,\  x_3\in \RR}
\end{eqnarray*}
where $i^2=j^2=k^2=-1; \ ij=-ji=k; \ jk=-kj=i$ and $ki=-ik=j$. For each quaternion $q=x_0+x_1i +x_2j +x_3k \in \mathfrak{Q}$, the  conjugate of $q$ is
denoted by $\overline{q}$ and is defined as
\begin{eqnarray*}
\overline{q}=x_0-x_1i -x_2j -x_3k \in \mathfrak{Q}.
\end{eqnarray*}
If $q=x_0+x_1i +x_2j +x_3k$ is a quaternion, then $x_0$ is called the real part of $q$ and $x_1i +x_2j +x_3k$ is called the imaginary part  of $q$. The modulus of a quaternion $q=x_0+x_1i +x_2j +x_3k$ is defined as
\begin{eqnarray*}
|q|=(\overline{q}q)^{1/2} = (q\overline{q})^{1/2}= \sqrt{x_0^2 +x_1^2 +x_2^2 +x_3^2 }.
\end{eqnarray*}
For every non-zero quaternion $q=x_0+x_1i +x_2j +x_3k \in \mathfrak{Q}$, there exists a unique inverse $q^{-1}$ in $\mathfrak{Q}$ as
\begin{eqnarray*}
q^{-1}=\dfrac{\overline{q}}{|q|^2 } = \dfrac{x_0-x_1i -x_2j -x_3k }{{x_0^2 +x_1^2 +x_2^2 +x_3^2 }}.
\end{eqnarray*}

\begin{definition}
	A \textit{right quaternionic vector space} $\mathds{V}_R(\HH)$ is a   vector space under right scalar multiplication over the field of quaternionic $\HH$, i.e.,
	\begin{eqnarray}\label{2.1}
	\mathds{V}_R(\HH)\times\HH &\rightarrow& \mathds{V}_R(\HH) \nonumber\\ 
	(u,q)&\rightarrow& uq
	\end{eqnarray}
	and for each $u, v\in\mathds{V}_R(\HH)$ and $p, q\in\HH$, the right scalar multiplication (\ref{2.1}) satisfying the following properties: 
	\begin{eqnarray*}
	&&(u+v)q=uq+vq\\
	&&u(p+q)=up+uq\\
	&&v(pq)=(vp)q.
	\end{eqnarray*}
\end{definition}

\begin{definition}\label{qhs}
	A \textit{right quaternionic pre-Hilbert space} or \textit{right quaternionic inner product space} $\mathds{V}_R(\HH)$ is a right quaternionic vector space together with the binary mapping
	$\langle . | . \rangle : \mathds{V}_R(\HH) \times \mathds{V}_R(\HH) \to \mathfrak{Q}$ (called the \textit{Hermitian quaternionic inner product})
	which satisfies following properties:
	\begin{enumerate}[label=(\alph*)]
		\item $\overline{\langle v_1 | v_2 \rangle} = \langle v_2 | v_1 \rangle$  for all $v_1, v_2 \in \mathds{V}_R(\HH)$.
		\item $\langle v | v \rangle > 0 \ \text{for all} \ \ 0 \ne  v\in \nh.$
		\item $\langle v | v_1 + v_2 \rangle = \langle v | v_1 \rangle + \langle v | v_2 \rangle $  for all $v, v_1, v_2 \in \mathds{V}_R(\HH).$
		\item $\langle v | uq \rangle = \langle v | u \rangle q $  for all $v, u \in \mathds{V}_R(\HH)$ and $ q \in \mathfrak{Q}$.
	\end{enumerate}
\end{definition}

Let $\mathds{V}_R(\HH)$ be right quaternionic inner product space with the Hermitian inner product $\langle .|.\rangle$. ~Define the quaternionic norm $\|.\|:\mathds{V}_R(\HH)\rightarrow\RR^+$ on $\mathds{V}_R(\HH)$ by
\begin{eqnarray}\label{2.2}
\|u\|=\sqrt{\langle u|u\rangle},\ u\in\mathds{V}_R(\HH).
\end{eqnarray}

\begin{definition}
	The right quaternionic pre-Hilbert space  is called a \textit{right quaternionic Hilbert space}, if it is complete with respect to the norm (2.2) and is denoted by $\H$.
\end{definition}

\begin{theorem}[\cite{GMP}) (The Cauchy-Schwarz Inequality] 
	If $\H$ is a right quaternionic Hilbert space then  
	\begin{eqnarray*}
	|\langle u|v\rangle|^2\leq\langle u|u\rangle\langle v|v\rangle, \ \ \text{for all}\ \ u, v \in \H.
	\end{eqnarray*}
	Moreover, a norm as defined in (\ref{2.2}) satisfies the following properties:
	\begin{enumerate}[label=(\alph*)]
		\item $\|uq\|=\|u\||q|$, for all $u\in\H$ and $q\in\HH$.
		\item $\|u+v\|\leq\|u\|+\|v\|$, for all $u, v\in\H$.
		\item $\|u\|=0$ for some $u\in\H$, then $u=0$.
	\end{enumerate}
\end{theorem}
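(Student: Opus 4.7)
The plan is to prove the Cauchy-Schwarz inequality first, then deduce the three norm properties (a), (b), (c) in order. The main subtlety throughout is that, because $\HH$ is non-commutative, one must be careful about the side on which scalars appear. From the axioms in Definition \ref{qhs}, the inner product is right-linear in the second slot: $\langle v \mid uq\rangle = \langle v\mid u\rangle q$; combined with the Hermitian symmetry axiom (a), this forces the identity $\langle uq\mid v\rangle = \overline{q}\,\langle u\mid v\rangle$ on the first slot. I would record these two rules at the outset, since every subsequent manipulation uses them.

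For Cauchy-Schwarz, the approach is the classical one, but with the scalar chosen on the correct side. If $v = 0$ the inequality is trivial. Otherwise $\langle v\mid v\rangle$ is a strictly positive real (hence central in $\HH$), so I set $\lambda := \langle v\mid v\rangle^{-1}\langle v\mid u\rangle \in \HH$ and expand
\begin{eqnarray*}
0 \le \langle u - v\lambda \mid u - v\lambda\rangle
  = \langle u\mid u\rangle - \langle u\mid v\rangle\lambda - \overline{\lambda}\,\langle v\mid u\rangle + \overline{\lambda}\,\langle v\mid v\rangle\,\lambda
\end{eqnarray*}
using the two rules above. Since $\langle v\mid v\rangle$ is real and central, a direct substitution of $\lambda$ collapses the last three terms to $-\langle u\mid v\rangle\langle v\mid v\rangle^{-1}\langle v\mid u\rangle$; rearranging and multiplying by $\langle v\mid v\rangle > 0$ yields $|\langle u\mid v\rangle|^2 \le \langle u\mid u\rangle\,\langle v\mid v\rangle$. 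The main obstacle here is precisely making sure $\lambda$ is placed on the right in $v\lambda$ so that each of the middle terms really does equal $\langle u\mid v\rangle\langle v\mid v\rangle^{-1}\langle v\mid u\rangle$, which it does only because $\langle v\mid v\rangle$ is a real scalar.

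The three norm properties then follow quickly. For (a), I compute $\|uq\|^2 = \langle uq\mid uq\rangle = \overline{q}\,\langle u\mid u\rangle\,q$; since $\langle u\mid u\rangle \in \RR$ is central, this equals $\langle u\mid u\rangle\,\overline{q}q = \|u\|^2|q|^2$, and taking square roots gives $\|uq\| = \|u\||q|$. For (b), I expand $\|u+v\|^2 = \|u\|^2 + \langle u\mid v\rangle + \langle v\mid u\rangle + \|v\|^2$ and observe that $\langle u\mid v\rangle + \langle v\mid u\rangle = 2\,\mathrm{Re}\,\langle u\mid v\rangle \le 2|\langle u\mid v\rangle|$; applying Cauchy-Schwarz gives $\|u+v\|^2 \le (\|u\|+\|v\|)^2$. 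For (c), the definiteness axiom (b) of the inner product says $\langle u\mid u\rangle > 0$ whenever $u \ne 0$, so $\|u\| = 0$ forces $\langle u\mid u\rangle = 0$ and hence $u = 0$. Apart from the placement of scalars in the Cauchy-Schwarz step, no new ideas beyond the complex case are required.
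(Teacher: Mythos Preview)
Your argument is correct and is the standard adaptation of the classical Cauchy--Schwarz proof to the right-quaternionic setting: you have handled the non-commutativity carefully by placing $\lambda$ on the right in $v\lambda$ and by using that $\langle v\mid v\rangle$ is real and hence central, so all the cross terms collapse exactly as you claim. The deductions of (a), (b), (c) from the inner-product axioms and the Cauchy--Schwarz bound are likewise clean.

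There is nothing to compare against in the paper itself: the theorem is stated there with a citation to \cite{GMP} and no proof is given. Your write-up therefore supplies what the paper omits, and does so by the same route one finds in the cited reference (and in the complex case).
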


\noindent
For the non-commutative field of quaternions $\HH$,  define the quaternionic Hilbert space $\ell_2(\HH)$ by
\begin{eqnarray*}
\ell_2(\HH) = \bigg\{\{q_i\}_{i\in \NN}\subset \HH : \ \sum_{i\in \NN}|q_i|^2<+\infty \bigg\}
\end{eqnarray*}
under right multiplication by quaternionic scalars together with   the  quaternionic inner product on $\ell_2(\HH)$ defined as
\begin{eqnarray}\label{2.3}
\langle p|q\rangle=\sum_{i\in \NN}\overline{p_i}q_i,\ p=\{p_i\}_{i\in \NN}\ \text{and}\ q=\{q_i\}_{i\in \NN}\in\ell_2(\HH).
\end{eqnarray}
It is easy to observe that $\ell_2(\HH)$ is a right quaternionic Hilbert space with respect to quaternionic inner product (\ref{2.3}).

\begin{definition}[\cite{GMP}] Let $\H$ be a right quaternionic Hilbert Space and  $S$ be a subset of $\H$. Then, define the set:
	\begin{itemize}[leftmargin=.35in]
		\item	$S^{\bot}=\{v\in\H:\langle v|u\rangle=0\ \forall\ u\in S\}.$
		\item $\langle S\rangle$ be the right $\HH$-linear subspace of $\H$ consisting of all finite right $\HH$-linear combinations of elements of $S$.
	\end{itemize}
\end{definition}

\begin{theorem}[\cite{GMP}]\label{2.6t}
	Let $\H$ be a quaternionic Hilbert space and let $N$ be a subset of $\H$
	such that, for $z, z'\in N, \ \langle z|z'\rangle=0$ if $z\neq z'$ and $\langle z|z\rangle=1$. Then the following conditions are equivalent:
	\begin{enumerate}[label=(\alph*)]
		\item For every $u,v\in\H$, the series $\sum_{z\in N}\langle u|z\rangle\langle z|v\rangle$ converges absolutely and
		\begin{eqnarray*}
		\langle u|v\rangle=\sum_{z\in N}\langle u|z\rangle\langle z|v\rangle.
		\end{eqnarray*}
		\item For every $u\in\H$, $\|u\|^2=\sum\limits_{z\in N}|\langle z|u\rangle|^2$.
		\item $N^\bot={0}$.
		\item $\langle N\rangle$ is dense in $H$.
	\end{enumerate}
\end{theorem}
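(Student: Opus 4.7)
The plan is to prove the four conditions equivalent by the cyclic chain $(a)\Rightarrow(b)\Rightarrow(c)\Rightarrow(d)\Rightarrow(a)$, adapting the classical orthonormal-basis argument to the non-commutative setting and being careful throughout to keep quaternion scalars on the correct side of each vector.

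The first three implications are short. For $(a)\Rightarrow(b)$, I would take $v=u$ in the identity, so each summand becomes $\la u|z\ra\la z|u\ra=\overline{\la z|u\ra}\la z|u\ra=|\la z|u\ra|^2$, and the equality reduces to $\|u\|^2=\sum_{z\in N}|\la z|u\ra|^2$. For $(b)\Rightarrow(c)$, any $u\in N^\bot$ satisfies $\la z|u\ra=\overline{\la u|z\ra}=0$ for every $z\in N$, so (b) gives $\|u\|=0$, hence $u=0$. For $(c)\Rightarrow(d)$, I would argue by contrapositive: if the closure $M=\overline{\la N\ra}$ is a proper closed right $\HH$-linear subspace of $\H$, the projection theorem for right quaternionic Hilbert spaces supplies a nonzero $w\in M^\bot\subseteq N^\bot$, contradicting (c).

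The real work lies in $(d)\Rightarrow(a)$. First I would establish Bessel's inequality by expanding, for any finite $F\subseteq N$,
\[ \Bigl\|u-\sum_{z\in F}z\la z|u\ra\Bigr\|^2\ge 0 \]
using right-scalar sesquilinearity and orthonormality of $N$; this yields $\sum_{z\in F}|\la z|u\ra|^2\le\|u\|^2$. Supremizing over $F$ places $\{\la z|u\ra\}_{z\in N}$ in $\ell_2(\HH)$, and Cauchy-Schwarz in $\ell_2(\HH)$ then gives absolute convergence of $\sum_{z\in N}|\la u|z\ra||\la z|v\ra|$. Next I would show that the partial sums $u_F:=\sum_{z\in F}z\la z|u\ra$ converge to $u$ in norm along the directed set of finite subsets of $N$. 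Density of $\la N\ra$ furnishes, for each $\varepsilon>0$, a finite $F_0\subseteq N$ and coefficients $\{b_z\}_{z\in F_0}\subset\HH$ with $\|u-\sum_{z\in F_0}zb_z\|<\varepsilon$; a short computation (completing the square in the quaternionic inner product) verifies the standard best-approximation property of $u_{F_0}$ inside $\la F_0\ra$, so that $\|u-u_F\|<\varepsilon$ for every $F\supseteq F_0$. Writing the analogous expansion for $v$ and using continuity of the inner product then gives
\[ \la u|v\ra=\lim_F \la u_F|v_F\ra=\lim_F\sum_{z\in F}\la u|z\ra\la z|v\ra, \]
which is (a).

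The main obstacle is purely notational bookkeeping: in the quaternionic setting one must keep each scalar $\la z|u\ra$ on the right of $z$, use property (d) of Definition \ref{qhs} to pull scalars out of the second slot, and invoke the conjugate identity $\la uq|v\ra=\overline{q}\la u|v\ra$ whenever a scalar is extracted from the first slot. Once these rules are respected, the classical real/complex arguments transcribe essentially verbatim, and no genuinely new analytic ingredient beyond the projection theorem for right quaternionic Hilbert spaces is required.
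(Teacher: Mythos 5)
The paper does not prove this result at all: it is imported verbatim from the reference \cite{GMP} (Ghiloni--Moretti--Perotti), so there is no internal proof to compare against. Your proposal is a correct rendition of the standard argument (the cyclic chain $(a)\Rightarrow(b)\Rightarrow(c)\Rightarrow(d)\Rightarrow(a)$, with Bessel's inequality, best approximation on finite subsets, and continuity of the inner product), and the quaternionic bookkeeping you describe is exactly right: $\la z q\,|\,u\ra=\overline{q}\,\la z|u\ra$ and $\la u\,|\,zq\ra=\la u|z\ra q$ make the finite expansions $\bigl\|u-\sum_{z\in F}z\la z|u\ra\bigr\|^2=\|u\|^2-\sum_{z\in F}|\la z|u\ra|^2$ and $\la u_F|v_F\ra=\sum_{z\in F}\la u|z\ra\la z|v\ra$ come out as in the complex case. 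The only external ingredient you invoke, the existence of orthogonal complements of closed right $\HH$-linear subspaces for the step $(c)\Rightarrow(d)$, does hold in this setting and is itself established in \cite{GMP}, so the argument is complete; note also that your use of nets over finite subsets correctly covers a possibly uncountable $N$, even though in the separable case of this paper $N$ is countable.
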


\begin{definition}[\cite{GMP}]
	Every quaternionic Hilbert space $\H$ admits a subset $N$, called \textit{Hilbert basis or orthonormal basis} of $\H$, such that, for $z, z'\in N$, $\langle z|z'\rangle=0$ if $z\neq z'$ and $\langle z|z\rangle=1$ and satisfies all the conditions of Theorem \ref{2.6t}.
\end{definition}
Further, if there are two such sets, then they have the same cardinality. Furthermore, if $N$ is a Hilbert basis of $\H$, then  every $u\in\H$ can be uniquely expressed as
\begin{eqnarray*}
u=\sum_{z\in N}z\langle z|u\rangle,
\end{eqnarray*}
where the series $\sum\limits_{z\in N}z\langle z|u\rangle$ converges absolutely in $\H$.

\begin{definition}[\cite{adler}]
	Let $\H$ be a right quaternionic Hilbert space and $T$ be an operator on $\H$. Then $T$ is said to be
	\begin{itemize}[leftmargin=.25in]
		\item \emph{right $\HH$-linear} if
		$T( v_1\alpha + v_2\beta) = T(v_1)\alpha +  T(v_2)\beta, \ \text{for all} \ v_1, v_2 \in \H \ \text{and} \ \alpha, \beta \in \HH.$
		\item \emph{bounded} if there
		exist $K\ge 0$ such that
		$\pp{T(v)} \le K \pp{v}, \ \text{for all} \ v\in \H.$
	\end{itemize}
\end{definition}

\begin{definition}[\cite{adler}]
	Let $\H$ be a right quaternionic Hilbert space and $T$ be an operator on $\H$. Then the \textit{adjoint operator} $T^*$ of $T$ is defined by
	\begin{eqnarray*}
	\la v|Tu\ra = \la T^*v|u\ra, \ \text{for all} \ u, v \in\H.
	\end{eqnarray*}
	Further, $T$ is said to be \emph{self-adjoint} if $T=T^*$.
\end{definition}

\begin{theorem}[\cite{adler}]
	Let $\H$ be a right quaternionic Hilbert space and $S$ and  $T$ be two bounded right $\HH$-linear operators on $\H$. Then
	\begin{enumerate}[label=(\alph*)]
		\item $T+S$ and $TS\in\mathfrak{B}(\H)$. Moreover:
		\begin{eqnarray*}
		\|T+S\|\leq\|T\|+\|S\|\ \text{and}\ \|TS\|\leq\|T\|\|S\|.
		\end{eqnarray*}
		\item $\la Tv|u\ra = \la v| T^*u \ra$.
		\item $(T+S)^*=T^*+S^*$.
		\item $(TS)^*=S^*T^*$.
		\item $(T^*)^*=T.$
		\item $I^*=I$, where $I$ is the identity operator on $\H$.
		\item If $T$ is an invertible operator then $(T^{-1})^*=(T^*)^{-1}$.
	\end{enumerate}
\end{theorem}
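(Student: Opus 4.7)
My plan is to mimic the standard complex Hilbert space arguments, replacing scalars with quaternions and taking care of the non-commutativity only where it forces an ordering change. The central tool will be the uniqueness of the adjoint through non-degeneracy of the inner product: if $A,B\in\mathfrak{B}(\H)$ satisfy $\la v|Au\ra=\la v|Bu\ra$ for every $u,v\in\H$, then substituting $v=(A-B)u$ gives $\pp{(A-B)u}^2=0$ for every $u$, hence $A=B$. With this in hand, the identities in (c)--(g) will all reduce to computing $\la v|Au\ra$ in two ways for the appropriate operator $A$.

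First I would dispose of (a) by the routine verification that $T+S$ and $TS$ are right $\HH$-linear, and by the estimates $\pp{(T+S)v}\le\pp{Tv}+\pp{Sv}\le(\pp{T}+\pp{S})\pp{v}$ and $\pp{TSv}\le\pp{T}\pp{Sv}\le\pp{T}\pp{S}\pp{v}$, taking suprema over the unit ball. For (b), I would start from the defining relation $\la v|Tu\ra=\la T^*v|u\ra$, apply quaternionic conjugation together with axiom (a) of Definition~\ref{qhs} to obtain $\la Tu|v\ra=\la u|T^*v\ra$, and then relabel $u\leftrightarrow v$.

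For (c), expanding $\la v|(T+S)u\ra$ and using the defining relation on each summand gives $\la (T^*+S^*)v|u\ra$, and uniqueness yields the identity. For (d), computing $\la v|TSu\ra=\la T^*v|Su\ra=\la S^*T^*v|u\ra$ and invoking uniqueness forces the order reversal $(TS)^*=S^*T^*$. For (e), combining the defining relation for $(T^*)^*$ with the already-proved (b) yields $\la(T^*)^*v|u\ra=\la v|T^*u\ra=\la Tv|u\ra$, hence $(T^*)^*=T$. Part (f) is immediate from $\la v|Iu\ra=\la v|u\ra=\la Iv|u\ra$. For (g), I would take adjoints in $TT^{-1}=T^{-1}T=I$ using (d) and (f) to obtain $(T^{-1})^*T^*=T^*(T^{-1})^*=I$, showing that $T^*$ is invertible with inverse $(T^{-1})^*$.

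The only subtle point, and the closest thing to a genuine obstacle, is the bookkeeping of conjugates and orderings, because in $\HH$ one has $\overline{pq}=\overline{q}\,\overline{p}$ and the inner product is right-linear only in its second argument. This is what forces the order reversal in (d); once the non-commutativity is respected, no further difficulty is expected, since every remaining step reduces to a direct application of the inner-product axioms followed by the uniqueness principle above.
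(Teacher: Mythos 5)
Your proposal is correct. Note, however, that the paper itself offers no proof of this theorem: it is stated as a quoted result with the citation \cite{adler}, so there is no internal argument to compare against. Your route — routine verification of right $\HH$-linearity and the norm estimates for (a), and then deriving (b)--(g) from the defining relation $\la v|Tu\ra=\la T^*v|u\ra$ together with the uniqueness of the adjoint obtained from non-degeneracy (setting $v=(A-B)u$) — is exactly the standard argument one finds in the quaternionic setting, with the only care needed being conjugate symmetry for additivity in the first slot and the order reversal $(TS)^*=S^*T^*$, both of which you handle correctly.
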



\subsection{Frames in quaternionic Hilbert spaces}

We begin this section with the following definition of frames in a  separable right quaternionic Hilbert space $\H$ as defined in \cite{SS}:
\begin{definition}Let $\H$ be a right quaternionic Hilbert space and $\{u_i\}_{i\in \NN}$ be a sequence in $\H$. Then $\cc{u_i}_{i\in \NN}$ is said to be a \textit{frame} for $V_R(\HH)$, if there exist two finite real quaternions $r_1$ and $r_2$ with $0<r_1\le r_2$  such that
	\begin{eqnarray}\label{3.1}
	r_1\|u\|^2\leq\sum_{i\in \NN}|\langle u_i|u\rangle|^2\leq r_2\|u\|^2, \ \text{for all}\ u\in V_R(\HH).
	\end{eqnarray}
	The finite positive real quaternions $r_1$ and $r_2$, respectively, are called \textit{lower}  and \textit{upper} frame bounds for the frame $\{u_i\}_{i\in \NN}$. The inequality (\ref{3.1}) is called \textit{frame inequality} for the frame $\{u_i\}_{i\in I}$. A sequence $\{u_i\}_{i\in \NN}$ is called a \textit{Bessel sequence} for a right quaternionic Hilbert space $\H$ with bound $r_2$, if $\{u_i\}_{i\in \NN}$ satisfies the right hand side of the inequality (\ref{3.1}).
	A frame $\{u_i\}_{i\in \NN}$ for a right quaternionic Hilbert space $V_R(\HH)$ is said to be
	\begin{itemize}[leftmargin=.25in]
		\item \textit{tight}, if it is possible to choose $r_1$ and $r_2$ satisfying inequality (3.1) with $r_1=r_2$.
		\item \textit{Parseval frame}, if it is tight with $r_1=r_2=1$.
		\item \textit{exact}, if it ceases to be a frame whenever anyone of its element is removed.
	\end{itemize}
\end{definition}

 If $\{u_i\}_{i\in \NN}$ is a Bessel sequence for a right quaternionic Hilbert space $V_R(\HH)$. Then, the \textit{(right) synthesis operator} for $\{u_i\}_{i\in \NN}$ is a right linear operator $T:\ell_2(\HH)\to V_R(\HH)$ defined by
\begin{eqnarray*}T(\{q_i\}_{i\in \NN})=\sum_{i\in \NN}u_iq_i,\ \ \{q_i\}_{i\in \NN}\in\ell_2(\HH).
\end{eqnarray*}
The adjoint operator $T^*$ of right synthesis operator $T$ is called the \textit{(right) analysis operator}. Further,  the  analysis operator $T^*:V_R(\HH)\to \ell_2(\HH)$  is given by
\begin{eqnarray*}
T^*(u)=\{\langle u_i|u\rangle\}_{i\in \NN},\ u\in V_R(\HH).
\end{eqnarray*}
Infact, for $u\in V_R(\HH)$ and $\{q_i\}_{i\in \NN}\in\ell_2(\HH)$, we have
\begin{eqnarray*}
\langle T^*(u)|\{q_i\}_{i\in \NN}\rangle&=&\langle u|T(\{q_i\}_{i\in \NN})\rangle\\
&=&\bigg\langle u\bigg|\sum_{i\in \NN}u_iq_i\bigg\rangle\\
&=&\sum_{i\in\NN}\langle u|u_i\rangle q_i\\
&=&\bigg\langle \{\langle u_i|u\rangle\}_{i\in \NN}, \{q_i\}_{i\in \NN}\bigg\rangle.
\end{eqnarray*}
Thus, we get
\begin{eqnarray*}
T^*(u)=\{\langle u_i|u\rangle\}_{i\in \NN},\ u\in V_R(\HH).
\end{eqnarray*}

Let $\H$ be a right quaternionic Hilbert space and $\{u_i\}_{i\in \NN}$ be a frame for $V_R(\HH)$. Then, the \textit{(right) frame operator} $S:V_R(\HH)\rightarrow V_R(\HH)$ for the frame $\{u_i\}_{i\in \NN}$ is a right linear operator given by
\begin{eqnarray*}
S(u)&=&TT^*(u)\\
&=&\sum_{i\in\NN}u_i\langle u_i|u\rangle,\ u\in \H.
\end{eqnarray*}


\section{Riesz Basis in Quaternionic Hilbert Spaces}

One may recall that, if $N, N' \subset \H$ are Hilbert bases, then there exists a bounded unitary right linear operator $U$ such that $U(N) = N'$. Moreover, if $V$ is bounded unitary right linear operator, then $\cc{V(z)\in \H : z \in N}$ is a Hilbert basis of $H$. We have the following modification of the definition of Riesz basis in a separable right quaternionic Hilbert space

\begin{definition}
	Let $\H$ be a right quaternionic Hilbert space. Then, a \textit{Riesz basis} for $\H$ is a family of the form $U(N)=\cc{U(e_n)}_{n\in\NN}$, where $N=\cc{e_n}_{n\in \NN}$ is an orthonormal basis or Hilbert basis for $\H$ and $U : \H\to \H$ is a bounded right linear bijective operator.
\end{definition}
In the next result, we  prove that a Riesz basis produces another unique Riesz basis which is used in the reconstruction of a vector of the space.

\begin{theorem}\label{Th42}
Let $\H$ be a right quaternionic Hilbert space and $X=\cc{x_n}_{n\in\NN}$ be  a Riesz basis for $\H$. Then $X$ is a Bessel sequence. Further, there exists a unique sequence $Y=\cc{y_n}_{n\in\NN}\subset\H$ such that
	\begin{eqnarray}\label{rz}
	u=\sum_{n\in \NN} x_n \la y_n | u\ra , \ \text{for all } u \in \H.
	\end{eqnarray}
The sequence $Y$ is also a Riesz basis, and the series (\ref{rz}) converges unconditionally for every $u\in \H$.
\end{theorem}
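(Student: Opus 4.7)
By the definition of a Riesz basis, write $x_n=U(e_n)$ where $N=\{e_n\}_{n\in\NN}$ is a Hilbert basis of $\H$ and $U$ is a bounded bijective right linear operator on $\H$. The Bessel property is then immediate: for every $u\in\H$,
\begin{eqnarray*}
\sum_{n\in\NN}|\la x_n|u\ra|^2
=\sum_{n\in\NN}|\la U(e_n)|u\ra|^2
=\sum_{n\in\NN}|\la e_n|U^*(u)\ra|^2
=\|U^*(u)\|^2\le\|U\|^2\|u\|^2,
\end{eqnarray*}
where the first equality uses the adjoint identity $\la Uv|u\ra=\la v|U^*u\ra$ and the penultimate one uses Theorem \ref{2.6t}(b) applied to $U^*(u)$ with the Hilbert basis $N$.

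Next, the natural candidate for the dual sequence is $y_n:=(U^*)^{-1}(e_n)$, which is well defined since $U^*$ is bijective and $(U^*)^{-1}=(U^{-1})^*$ is bounded. Since $Y=\{V(e_n)\}_{n\in\NN}$ with $V:=(U^*)^{-1}$ bounded bijective right linear, $Y$ is by definition itself a Riesz basis. To verify (\ref{rz}), I would first compute
\begin{eqnarray*}
\la y_n|u\ra=\la(U^*)^{-1}(e_n)|u\ra=\la e_n|((U^*)^{-1})^*(u)\ra=\la e_n|U^{-1}(u)\ra,
\end{eqnarray*}
and then apply right linearity and continuity of $U$ to the Hilbert basis expansion $U^{-1}(u)=\sum_n e_n\la e_n|U^{-1}(u)\ra$ to conclude
\begin{eqnarray*}
\sum_{n\in\NN}x_n\la y_n|u\ra=U\bigg(\sum_{n\in\NN}e_n\la e_n|U^{-1}(u)\ra\bigg)=U(U^{-1}(u))=u.
\end{eqnarray*}

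Unconditional convergence in (\ref{rz}) follows from the same mechanism: over finite $F\subset\NN$ the partial sums $\sum_{n\in F}e_n\la e_n|U^{-1}(u)\ra$ form a Cauchy net, since Pythagoras gives $\|\sum_{n\in F}e_nc_n\|^2=\sum_{n\in F}|c_n|^2$ and $\sum_n|\la e_n|U^{-1}(u)\ra|^2<\infty$ by Theorem \ref{2.6t}(b); boundedness of $U$ transports this unconditional convergence to the series in (\ref{rz}). For uniqueness of $Y$, suppose $\{y_n'\}$ also satisfies (\ref{rz}); then for each $u\in\H$ the series $\sum_n x_n\bigl(\la y_n|u\ra-\la y_n'|u\ra\bigr)$ equals $0$, and applying $U^{-1}$ to the partial sums (permissible by continuity and right linearity) gives $\sum_n e_n\la y_n-y_n'|u\ra=0$; the orthonormal Pythagoras identity then forces $\la y_n-y_n'|u\ra=0$ for every $n$ and every $u$, whence $y_n=y_n'$.

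The argument is essentially formal once the operator $U$ from the definition of a Riesz basis is fixed; the only point requiring genuine care is bookkeeping in the non-commutative setting, namely that scalar quaternions remain on the right throughout adjoint and limit manipulations, and that Theorem \ref{2.6t} is invoked in the precise form it is stated for Hilbert bases of $\H$. I do not anticipate any obstacle beyond what already appears in the classical complex Hilbert space proof.
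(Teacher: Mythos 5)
Your proposal is correct and follows essentially the same route as the paper: you use the same dual sequence $y_n=(U^{-1})^*e_n$, the same computation $\sum_n|\la x_n|u\ra|^2=\|U^*(u)\|^2\le\|U\|^2\|u\|^2$ for the Bessel bound, and the same factorization $u=U(U^{-1}u)$ through the Hilbert basis expansion to get the reconstruction formula. Your treatments of unconditional convergence and of uniqueness are merely more detailed versions of the steps the paper states tersely, so there is nothing substantively different to flag.
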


\begin{proof}
As $X$ is a Riesz basis,  $X=\cc{x_n}_{n\in \NN}=\cc{U(e_n)_{n\in \NN}}$, where $\cc{e_n}_{n \in \NN}$ is an orthonormal basis for $\H$ and $U : \H \to \H$ is a bounded right linear bijective operator. So, for every $u\in \H$, we have
\begin{eqnarray*}
U^{-1}(u)= \sum_{n\in \NN}  e_n \left\la e_n | U^{-1}(u) \right\ra
\end{eqnarray*}
Define $Y= \cc{y_n}_{n\in \NN} = \cc{(U^{-1})^*e_n}_{n\in \NN}$. Then, we compute
\begin{eqnarray*}
u &=& UU^{-1} u \\
&=&  \sum_{n\in \NN} U(e_n) \left\la (U^{-1})^*e_n | u \right\ra\\
&=&\sum_{n\in \NN} x_n \left\la y_n| u \right\ra.
\end{eqnarray*}
Since the operator $(U^{-1})^*$ is a bounded right linear bijective operator,  $Y$ is also a Riesz basis. Further, for each $u\in \H$, we have
\begin{eqnarray}
\sum_{n\in \NN} |\la x_n|u \ra|^2  &=& \sum_{n\in \NN} |\la U(e_n)|u \ra|^2 \notag\\
&=& \pp{U^*(u)}^2\notag\\
&\le& \pp{U}^2 \pp{u}^2.
\end{eqnarray}
So, $X$ is a Bessel sequence. Thus, the series (\ref{rz}) converges unconditionally. 

Further,  as $U^{-1}$ is a right linear bounded operator and $\cc{e_n}_{n\in\NN}$ is an orthonormal basis, every  Riesz basis in $\H$ is  also a basis for $\H$. Therefore, if $\cc{y_n}_{n\in \NN}$ and $\cc{z_n}_{n\in \NN}$ are sequences in $\H$ such that for every $u\in \H$
\begin{eqnarray*}
u =\sum_{n\in \NN} x_n\la y_n |u \ra = \sum_{n\in \NN} x_n\la z_n |u \ra,
\end{eqnarray*}
then $\cc{y_n}_{n\in \NN}=\cc{z_n}_{n\in \NN}$. 
\end{proof}

The unique sequence $Y \left(= \cc{y_n}_{n\in \NN} = \cc{(U^{-1})^*e_n}_{n\in \NN}\right) \subset \H$ satisfying (\ref{rz}) is called the \textit{dual Riesz basis} of $X\left(= \cc{x_n}_{n\in\NN} = \cc{U(e_n)}_{n\in \NN}\right)$. Similarly, the dual of $Y$ is
\begin{eqnarray*}
\left\{\left(\left(\left(U^{-1}\right)^*\right)^{-1}\right)^*e_n\right\}_{n\in \NN} &=& \cc{U(e_n)}_{n\in \NN}\\&=& X.
\end{eqnarray*}
Therefore, $X$ and $Y$  are duals of each other. So, we
call them a \textit{pair of dual Riesz bases }in $\H$.

Related to pair of dual Riesz bases, we give the following result.

\begin{theorem}\label{Th43}
	Let  $X=\cc{x_n}_{n\in\NN}$ and $Y=\cc{y_n}_{n\in\NN}$ be  a pair of dual  Riesz basis for a quaternionic Hilbert space $\H$. Then 
	\begin{enumerate}[label=\rm{(\alph*)}]
		\item $X$ and $Y$ forms a  biorthogonal system.
		\item For every $u \in \H$, 
	\begin{eqnarray*}
	u =\sum_{n\in \NN} x_n\la y_n |u \ra = \sum_{n\in \NN} y_n\la x_n |u \ra .
	\end{eqnarray*}
	\end{enumerate}
\end{theorem}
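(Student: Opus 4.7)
The plan is to exploit the explicit representation of the two dual Riesz bases coming out of the proof of Theorem \ref{Th42}, namely $x_n = U(e_n)$ and $y_n = (U^{-1})^* e_n$, where $\{e_n\}_{n\in\NN}$ is a Hilbert basis of $\H$ and $U\in \mathfrak{B}(\H)$ is bijective. With these formulas in hand, both parts of the theorem reduce to routine applications of the adjoint identity $\la v\,|\,Tu\ra = \la T^*v\,|\,u\ra$ and the orthonormality of $\{e_n\}$.

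For part (a), I would simply compute $\la y_m\,|\,x_n\ra$ directly. Setting $T = U^{-1}$ in the adjoint identity and noting that $y_m = T^* e_m$, we get
\begin{eqnarray*}
\la y_m\,|\,x_n\ra = \la (U^{-1})^* e_m\,|\,U(e_n)\ra = \la e_m\,|\,U^{-1}U(e_n)\ra = \la e_m\,|\,e_n\ra,
\end{eqnarray*}
which equals $1$ when $m=n$ and $0$ otherwise by the definition of a Hilbert basis. This establishes biorthogonality.

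For part (b), the first equality $u = \sum_{n\in\NN} x_n\la y_n\,|\,u\ra$ is exactly the reconstruction formula (\ref{rz}) of Theorem \ref{Th42}. For the second equality, I would invoke the symmetry remark already established immediately after Theorem \ref{Th42}: the dual of $Y$ is $X$ itself, because $(((U^{-1})^*)^{-1})^* = U$. Since $Y$ is in its own right a Riesz basis (the operator $(U^{-1})^*$ being bounded and bijective), applying Theorem \ref{Th42} with $Y$ in the role of the Riesz basis and $X$ as its dual yields $u = \sum_{n\in\NN} y_n\la x_n\,|\,u\ra$ for every $u\in\H$, with unconditional convergence inherited from Theorem \ref{Th42}.

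I do not anticipate a serious obstacle; the only thing to handle carefully is the convention on the quaternionic inner product, which is right-linear in the second slot, so the scalar coefficients $\la y_n\,|\,u\ra$ and $\la x_n\,|\,u\ra$ must remain to the right of the vectors $x_n$ and $y_n$ throughout. As long as one respects the order of multiplication dictated by right $\HH$-linearity and quotes the adjoint identity from the preliminaries in the correct form, both parts follow in a few lines.
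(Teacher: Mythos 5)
Your proof is correct and follows essentially the same route as the paper: part (a) is the same direct computation $\la (U^{-1})^*e_m\,|\,U(e_n)\ra = \la e_m\,|\,e_n\ra$ via the adjoint identity, and part (b), which the paper dismisses as straightforward, is filled in exactly as intended by combining the reconstruction formula of Theorem \ref{Th42} with the observation that $X$ and $Y$ are duals of each other. No gaps.
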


\begin{proof}(a). As $X$ and $Y$ are Riesz bases,   $X=\cc{U(e_n)}_{n\in \NN}$ and $Y= \cc{(U^{-1})^*e_n}_{n\in \NN}$, where  $U : \H \to \H$ is a bounded right linear bijective operator. Then, we have
\begin{eqnarray*}
\la x_i | y_j\ra &=& \la U(e_i) | (U^{-1})^*e_j\ra\\
&=& \la U^{-1}U(e_i) | e_j\ra\\
&=& \left\la e_i |e_j\right\ra = \delta_{ij}, \ \ \text{for each} \ \ i,\ j \in \NN.
\end{eqnarray*}

\noindent
(b). Straight forward.
\end{proof}

In the next result, we show that every Riesz basis in a quaternionic Hilbert space $\H$ is a frame for $\H$.

\begin{theorem}\label{Th42}
	If $\H$ is a right quaternionic Hilbert space and $X=\cc{x_n}_{n\in\NN}$ be  a Riesz basis for $\H$, then $X$ is a frame for $\H$. 
\end{theorem}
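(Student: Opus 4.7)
The plan is to exploit the representation $x_n = U(e_n)$ with $U$ a bounded right linear bijection to produce both frame bounds, reusing most of the work already done in the proof of Theorem \ref{Th42} (the first one with that label).

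First I would record the upper bound, which is essentially already in the excerpt. For any $u \in \H$, by Parseval's identity (Theorem \ref{2.6t}(b)) applied to the orthonormal basis $\{e_n\}$, together with the adjoint relation $\la U(e_n) | u \ra = \la e_n | U^* u \ra$, I get
\begin{eqnarray*}
\sum_{n\in\NN} |\la x_n | u \ra|^2 \;=\; \sum_{n\in\NN} |\la e_n | U^* u \ra|^2 \;=\; \|U^* u\|^2 \;\le\; \|U\|^2 \|u\|^2,
\end{eqnarray*}
so $r_2 := \|U\|^2$ serves as the upper frame bound.

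The main work is the lower bound. Since $U$ is a bounded bijection on $\H$, the open mapping theorem (valid in the quaternionic Hilbert space setting, since $\H$ is a Banach space over $\RR$) guarantees that $U^{-1}$ is a bounded right linear operator. By Theorem 2.11(g), $(U^{-1})^* = (U^*)^{-1}$, so $U^*$ is also boundedly invertible. Writing $u = (U^*)^{-1} U^* u$ and applying the operator norm inequality gives
\begin{eqnarray*}
\|u\| \;\le\; \|(U^*)^{-1}\| \, \|U^* u\| \;=\; \|(U^{-1})^*\| \, \|U^* u\|,
\end{eqnarray*}
and hence, squaring and combining with the Parseval computation above,
\begin{eqnarray*}
\|(U^{-1})^*\|^{-2} \, \|u\|^2 \;\le\; \|U^* u\|^2 \;=\; \sum_{n\in\NN} |\la x_n | u\ra|^2.
\end{eqnarray*}
This yields the lower bound $r_1 := \|(U^{-1})^*\|^{-2} > 0$.

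The principal obstacle is the subtle step of asserting that $U^{-1}$ (and thus $(U^{-1})^*$) is bounded; in the classical complex setting this is immediate from the open mapping theorem, and the same reasoning applies here because a right quaternionic Hilbert space is a real Banach space under the norm (2.2), so standard Banach-space consequences transfer. Once the boundedness of $U^{-1}$ is granted, the two bounds combine to give $r_1 \|u\|^2 \le \sum_{n\in\NN} |\la x_n | u\ra|^2 \le r_2 \|u\|^2$ for all $u \in \H$, which is exactly the frame inequality \eqref{3.1}, completing the proof.
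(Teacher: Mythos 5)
Your proof is correct and follows essentially the same route as the paper: the upper bound comes from $\sum_n|\la x_n|u\ra|^2=\|U^*u\|^2\le\|U\|^2\|u\|^2$, and the lower bound from writing $u=(U^*)^{-1}U^*u$ to get $\|u\|\le\|U^{-1}\|\,\|U^*u\|$, yielding bounds $1/\|U^{-1}\|^2$ and $\|U\|^2$. Your added remark that boundedness of $U^{-1}$ follows from the open mapping theorem is a point the paper leaves implicit, but it does not change the argument.
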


\begin{proof}
	In view of (3.6) it is clear that,  $X=\cc{U(e_n)}_{n\in \NN}$  is a Bessel sequence with  bound $\pp{U}^2$, where $U : \H \to \H$ is a bounded right linear bijective operator. Also
	\begin{eqnarray*}
	\pp{u} &=& \pp{(U^*)^{-1} U^* (u)} \\
	&\le& \pp{(U^*)^{-1}} \ \pp{U^*(u)}\\
	&=& \pp{U^{-1}} \ \pp{U^*(u)}.
	\end{eqnarray*}
	Thus, $X$ is the frame with the largest possible lower frame bound  and the smallest
	possible upper frame bound $\dfrac{1}{\pp{U^{-1}}^2}$ and $\pp{U}^2$, respectively. Thus, Riesz bases produce frames for quaternionic Hilbert spaces.
\end{proof}
 Next result is related to regarding Bessel sequences in quaternionic Hilbert spaces.

\begin{theorem}\label{Th35}
Let $\H, \K$ be  right quaternionic Hilbert spaces and let\break$X=\cc{x_n}_{n\in \NN}\subset \H$, $Y=\cc{y_n}_{n\in \NN}\subset \K$. Assume that $Y$ is a Bessel sequence in $\K$ with
bound B,  $X$  is complete in $\H$, and  there exists a positive constant $A$ such that
\begin{eqnarray*}
A \sumi |q_n|^2 \le \left\|{\sumi x_n q_n}\right\|^2
\end{eqnarray*}
for all eventually zero sequences of quaternions $\cc{q_n}_{n\in \NN}$. Then  $U: \text{\rm span } X \to \text{\rm span } Y$ defined by
\begin{align*}
U\left( \sumi x_n q_n \right) = \sumi y_n q_n, \ \ \text{for all eventually zero sequences of quaternions $\cc{q_n}_{n\in \NN}$},
\end{align*}
is a bounded right linear operator with a unique extension to a bounded operator from $\H$ into $\K$, the norm of $U$ as well as its extension is at most $\sqrt{\dfrac{B}{A}}$.
\end{theorem}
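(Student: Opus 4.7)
The plan is to first verify that the prescription $U\bigl(\sum x_n q_n\bigr)=\sum y_n q_n$ unambiguously defines a right linear operator on $\text{span}\,X$, then establish the norm bound $\sqrt{B/A}$ on this dense subspace, and finally invoke the standard bounded extension principle in the quaternionic Hilbert setting to produce the unique extension to all of $\H$.

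For well-definedness, suppose two eventually zero sequences $\{q_n\}$ and $\{q_n'\}$ satisfy $\sum x_n q_n=\sum x_n q_n'$. Then $\sum x_n(q_n-q_n')=0$, and the lower inequality hypothesis forces $A\sum|q_n-q_n'|^2\le 0$, so $q_n=q_n'$ for every $n$. Thus $U$ is unambiguously defined, and right linearity is immediate from the formula (since the right scalar multiplication distributes over the sums in the obvious way, and the sums involved are finite).

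For boundedness, I would use the Bessel hypothesis on $Y$ to obtain the synthesis estimate $\bigl\|\sum y_n q_n\bigr\|^2\le B\sum|q_n|^2$ for every eventually zero sequence $\{q_n\}\subset\HH$. If this dual form is not directly at hand from the paper's preliminaries, it can be recovered by writing
\begin{equation*}
\Bigl\|\sum y_n q_n\Bigr\| = \sup_{\pp{v}=1}\Bigl|\Bigl\la v\Bigm|\sum y_n q_n\Bigr\ra\Bigr| = \sup_{\pp{v}=1}\Bigl|\sum \la v|y_n\ra q_n\Bigr|,
\end{equation*}
applying Cauchy–Schwarz in $\ell_2(\HH)$ to the right-hand side, and using $\sum|\la y_n|v\ra|^2\le B\pp{v}^2$. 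Combining this with the given lower inequality yields, for each $u=\sum x_n q_n\in\text{span}\,X$,
\begin{equation*}
\pp{U u}^2 = \Bigl\|\sum y_n q_n\Bigr\|^2 \le B\sum|q_n|^2 \le \frac{B}{A}\Bigl\|\sum x_n q_n\Bigr\|^2 = \frac{B}{A}\pp{u}^2,
\end{equation*}
so $U$ is bounded on $\text{span}\,X$ with norm at most $\sqrt{B/A}$.

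Finally, since $X$ is complete in $\H$, the subspace $\text{span}\,X$ is dense in $\H$. Right linear bounded operators on a dense subspace of a quaternionic Hilbert space extend uniquely, with the same operator norm, to bounded right linear operators on the whole space (the extension is defined, as in the complex case, by Cauchy sequences, and uniqueness follows from continuity). This produces the required extension $U:\H\to\K$ with $\pp{U}\le\sqrt{B/A}$. The principal point of care in the argument is the synthesis estimate for Bessel sequences in the quaternionic setting, where one has to ensure that the ordering of quaternionic scalars in the sums is respected throughout the Cauchy–Schwarz passage; once that is verified, the remainder is a clean combination of the two hypotheses.
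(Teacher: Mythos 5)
Your proposal is correct and follows essentially the same route as the paper: well-definedness of $U$ on $\text{span}\,X$ from the lower inequality, the bound $\pp{Uu}^2\le B\sum|q_n|^2\le \frac{B}{A}\pp{u}^2$ combining the Bessel (synthesis) estimate for $Y$ with the lower bound for $X$, and then the unique bounded extension by density since $X$ is complete. You simply supply details the paper leaves implicit (the uniqueness of coefficients and the derivation of the quaternionic synthesis estimate), and in fact your displayed chain corrects a typographical swap of $x_n$ and $y_n$ that appears in the paper's own computation.
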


\begin{proof}
	For every $x \in \text{span} \ X$, there exist a unique sequence of eventually zero quaternions $\cc{q_n}_{n\in \NN}$ such that 
	$
	x= \sumi x_n q_n.
	$
	Therefore,  $U: \text{span } X \to \text{span } Y$ defined by
	\begin{align*}
	U\left( \sumi x_n q_n \right) = \sumi y_n q_n, \ \ \text{for all eventually zero sequences of quaternions $\cc{q_n}_{n\in \NN},$}
	\end{align*}
	is well-defined and right linear. Further, for an eventually zero quaternions $\cc{q_n}_{n\in \NN}$, we have
	\begin{eqnarray*}
	\left\| U\left( \sumi x_n q_n \right)\right\|^2 &=& \left\| \sumi x_n q_n \right\|^2\\
	&\le& B \sumi |q_n|^2\\
	&\le& \frac{B}{A} \left\| \sumi y_n q_n \right\|^2.
	\end{eqnarray*}
	Thus, $U$ is bounded. Furthermore, as $X$ is   complete, $U$ has an unique	extension to a bounded  operator on $\H$ into $\K$.
\end{proof}

Next, we define a Riesz sequence as follows:

\begin{definition}
	Let $\H$ be a right quaternionic Hilbert space and $X=\cc{x_n}_{n\in\NN}$ be a sequence in $\H$. Then $X$ is said to be a  \textit{Riesz sequence} in $\H$ if there exists positive real quaternions $A$ and $B$ such that
	\begin{eqnarray}\label{e39}
	A  \sumi |q_n|^2 \le \left\| \sumi x_n q_n \right\|^2 \le B  \sumi |q_n|^2 , 
	\end{eqnarray}
	for all eventually zero sequences of quaternions $\cc{q_n}_{n\in \NN}$.
\end{definition}

The next theorem gives an equivalent condition for a Riesz basis in terms of a complete Riesz sequence.

\begin{theorem}\label{Th42}
	Let $\H$ be a right quaternionic Hilbert space. Then  $X=\cc{x_n}_{n\in\NN}$ is a Riesz basis for $\H$ if and only if $X$ is complete Riesz sequence in $\H$.
\end{theorem}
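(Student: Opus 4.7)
The plan is to prove the two implications separately, using the factorization through an orthonormal basis in each direction.

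For the forward direction, suppose $X = \cc{U(e_n)}_{n\in \NN}$ is a Riesz basis, where $\cc{e_n}_{n\in\NN}$ is a Hilbert basis and $U : \H\to\H$ is bounded right linear and bijective. Completeness of $X$ follows from the completeness of $\cc{e_n}$: for any $u \in \H$, expanding $U^{-1}(u) = \sumi e_n \la e_n | U^{-1}(u)\ra$ and applying the continuous operator $U$ yields $u = \sumi x_n \la e_n | U^{-1}(u)\ra$, so $u \in \overline{\text{span}} X$. For the Riesz bounds, given any eventually zero sequence $\cc{q_n}_{n\in\NN}$, the orthonormality of $\cc{e_n}$ gives $\pp{\sumi e_n q_n}^2 = \sumi |q_n|^2$, and writing $\sumi x_n q_n = U\!\left(\sumi e_n q_n\right)$ together with the estimate $\pp{v} \le \pp{U^{-1}}\,\pp{U(v)}$ sandwiches $\pp{\sumi x_n q_n}^2$ between $\frac{1}{\pp{U^{-1}}^2}\sumi |q_n|^2$ and $\pp{U}^2 \sumi |q_n|^2$. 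Hence $X$ is a complete Riesz sequence.

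For the converse, assume $X$ is a complete Riesz sequence with bounds $A, B > 0$. Pick any Hilbert basis $\cc{e_n}_{n\in\NN}$ of $\H$ (which exists by the basis theorem quoted in the preliminaries), and define $U_0$ on the right $\HH$-linear span of $\cc{e_n}$ by
\begin{eqnarray*}
U_0\!\left(\sumi e_n q_n\right) = \sumi x_n q_n
\end{eqnarray*}
for eventually zero sequences $\cc{q_n}_{n\in\NN}$. The upper Riesz bound $\pp{U_0(v)}^2 \le B \sumi |q_n|^2 = B\,\pp{v}^2$ shows $U_0$ is bounded by $\sqrt{B}$ on a dense subspace of $\H$, so it extends uniquely to a bounded right linear operator $U : \H\to\H$ with $U(e_n) = x_n$. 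Applying the lower Riesz bound to the partial sums $v_N = \sum_{n=1}^N e_n \la e_n | v\ra$ and passing to the limit (using continuity of $U$ and norm) gives $\pp{U(v)}^2 \ge A\,\pp{v}^2$ for every $v\in\H$.

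This lower bound shows $U$ is injective and has closed range. Since the range contains every $x_n = U(e_n)$ and $X$ is complete in $\H$, we get $\text{Range}(U) \supseteq \overline{\text{span}} X = \H$, so $U$ is surjective. Thus $U$ is a bounded right linear bijection with $U(e_n) = x_n$, which is exactly the condition for $X$ to be a Riesz basis. The main technical care lies in the second direction: extending the Riesz inequalities from eventually zero sequences to norm-limits of partial sums, and verifying that boundedness below combined with density of the range of $X$ forces surjectivity of $U$ in the quaternionic setting, which is handled by the same closed-range/bounded-below argument as in the complex case.
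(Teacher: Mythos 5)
Your proof is correct, and the forward direction is essentially the paper's own argument (completeness via the expansion of $U^{-1}(u)$ in the orthonormal basis, and the two-sided estimate $\frac{1}{\pp{U^{-1}}^2}\sum|q_n|^2 \le \pp{\sum x_n q_n}^2 \le \pp{U}^2\sum|q_n|^2$). The converse, however, follows a genuinely different route. The paper first invokes the uniform boundedness principle to push the Riesz inequality from eventually zero sequences to all of $\ell_2(\HH)$ and conclude that $X$ is Bessel, and then applies its extension result (Theorem \ref{Th35}) twice: once to extend $e_n \mapsto x_n$ to a bounded operator $U$ and once to extend $x_n \mapsto e_n$ to a bounded operator $V$, concluding from $VU = UV = I$ on dense subspaces that $U$ is invertible with explicit bounded inverse $V$. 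You instead build only the one operator $U$ by the standard bounded-linear-extension argument from the upper Riesz bound, transfer the lower bound to all of $\H$ by applying it to the partial sums $v_N = \sum_{n\le N} e_n\la e_n|v\ra$ and passing to the limit, and then deduce bijectivity from the bounded-below estimate (injectivity and closed range) together with completeness of $X$ (dense range inside a closed range forces surjectivity). Your version is more self-contained and avoids both the uniform boundedness step (which, as stated in the paper, is the least transparent part of its proof) and the second application of the extension lemma; the paper's version buys an explicit bounded inverse $V(x_n) = e_n$, i.e.\ the dual-basis operator, and recycles Theorem \ref{Th35}, which is why the authors stated it beforehand. Both arguments are valid in the quaternionic setting, since the closed-range/bounded-below reasoning carries over verbatim.
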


\begin{proof}
	As $X$ is a Riesz basis,   $X=\cc{x_n}_{n\in \NN}=\cc{U(e_n)_{n\in \NN}}$, where $\cc{e_n}_{n \in \NN}$ is an orthonormal  basis for $\H$ and $U : \H \to \H$ is a bounded right linear bijective operator. So, in view of Theorem \ref{Th42}, $X$ is complete in $\H$. Further, for any eventually zero sequences $\cc{q_n}_{n\in \NN} \subset \HH$, we have
	\begin{eqnarray}\label{e37}
	\left\| \sumi x_n q_n \right\|^2 &=& \left\| U\left(\sumi x_n q_n \right)\right\|^2 \notag\\
	&\le&\pp{U}^2\ \left\| \sumi x_n q_n \right\|^2\notag\\
	&=& B  \sumi |q_n|^2.
	\end{eqnarray}
	Further, we compute
	\begin{eqnarray}\label{e38}
	\sumi |q_n|^2 &=& \left\| \sumi e_n q_n \right\|^2 \notag\\
	&=& \left\|U^{-1}U \left(\sumi e_n q_n \right)\right\|^2 \notag\\
	&\le& \pp{U^{-1}}^2 \ \left\| \sumi x_n q_n \right\|^2.
	\end{eqnarray}
	Thus, in view of (\ref{e37}) and (\ref{e38}), we get
	\begin{eqnarray*}
	\dfrac{1}{\pp{U^{-1}}^2} \ \sumi |q_n|^2 \le \ \left\| \sumi x_n q_n \right\|^2 \le \pp{U}^2\ \sumi |q_n|^2.
	\end{eqnarray*}
	\medskip	
	
	Conversely, for each $i\in \NN$, define $U_i : \ell_2(\HH) \to \H$ as
	\begin{eqnarray*}
	U_i(\cc{q_n}_{n\in \NN}) = \sum_{n\le i} x_n q_n
	\end{eqnarray*}
	and $U : \ell_2(\HH) \to \H$ as
	\begin{eqnarray*}
	U(\cc{q_n}_{n\in \NN}) = \sum_{n\in \NN} x_n q_n.
	\end{eqnarray*}
	Then by  (\ref{e39}), each $U_n$ and $U$ are bounded  right linear operators and $\cc{U_n} \to U$. Then by uniform boundedness principle, (\ref{e39}) is true for all $\cc{q_n}_{n\in \NN} \in \ell^2(\HH)$.
	This implies that $X$ is a Bessel sequence with bound $B$. Next, choosing an orthonormal basis $\cc{e_n}_{n\in \NN}$ for $\H$, then by Theorem \ref{Th35}, there exist unique extensions of  the mappings $U(e_n) = x_n, \ n \in \NN$ and $V (x_n) = e_n,  \ n \in \NN$ to    bounded operators on $\H$.  Then, $V U = UV = I$. So $U$ is invertible. Thus, $X$  is a
	Riesz basis for $\H$.
\end{proof}

\begin{corollary}
			Let $\H$ be a right quaternionic Hilbert space and  $X$ be a Riesz sequence in $\H$. Then $X$ is a Riesz basis for $\overline{\text{span}}\ X$.
\end{corollary}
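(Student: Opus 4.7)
The plan is to deduce this corollary as a direct consequence of the preceding theorem (the equivalence of Riesz bases with complete Riesz sequences), applied inside the closed subspace $\overline{\text{span}}\, X$. The strategy is to verify that $\overline{\text{span}}\, X$ is itself a right quaternionic Hilbert space, that $X$ remains a Riesz sequence there, and that $X$ is complete in it; then the earlier theorem finishes the argument.

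First, I would observe that $\overline{\text{span}}\, X$, being a norm-closed right $\HH$-linear subspace of the separable right quaternionic Hilbert space $\H$, inherits the structure of a right quaternionic Hilbert space: right scalar multiplication, the Hermitian inner product, and the induced norm all restrict, and completeness of the subspace follows from it being closed in the complete space $\H$. Next, the sequence $X = \cc{x_n}_{n\in \NN}$ lies in $\overline{\text{span}}\, X$ by construction, and $\text{span}\, X$ is dense in $\overline{\text{span}}\, X$, so $X$ is complete in the Hilbert space $\overline{\text{span}}\, X$.

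Then I would transport the Riesz-sequence inequality. Since $X$ is a Riesz sequence in $\H$, there exist positive real quaternions $A, B$ such that
\begin{eqnarray*}
A \sumi |q_n|^2 \le \left\| \sumi x_n q_n \right\|^2 \le B \sumi |q_n|^2
\end{eqnarray*}
holds for every eventually zero sequence $\cc{q_n}_{n\in \NN}$ of quaternions. The norm on the right-hand side is the restriction of the norm of $\H$, so the same inequality holds verbatim when $X$ is regarded as a sequence in $\overline{\text{span}}\, X$. Hence $X$ is a Riesz sequence in the quaternionic Hilbert space $\overline{\text{span}}\, X$.

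Finally, I would invoke the preceding theorem: a complete Riesz sequence in a right quaternionic Hilbert space is a Riesz basis for that space. Applied to $X$ inside $\overline{\text{span}}\, X$, this yields that $X$ is a Riesz basis for $\overline{\text{span}}\, X$, completing the proof. I do not anticipate any real obstacle; the only subtlety worth mentioning explicitly is checking that a closed right $\HH$-linear subspace of a right quaternionic Hilbert space is itself a right quaternionic Hilbert space, but this is routine once one notes that closedness plus completeness of $\H$ forces completeness of the subspace.
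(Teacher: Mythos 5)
Your argument is correct and is precisely the ``straightforward'' deduction the paper intends: the closed subspace $\overline{\text{span}}\ X$ is itself a right quaternionic Hilbert space, $X$ is complete there and retains the Riesz sequence inequalities since the norm is just the restriction, so Theorem \ref{Th42} (Riesz basis $\Leftrightarrow$ complete Riesz sequence) applies. The paper gives no details, so your write-up simply supplies the routine verification it omits.
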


\begin{proof}
	Straight forward.
\end{proof}

\begin{corollary}
	Let $\H$ be a right quaternionic Hilbert space. Then every subfamily of a Riesz sequence is a Riesz sequence.
\end{corollary}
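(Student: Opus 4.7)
The plan is essentially a zero-extension argument. Suppose $X = \{x_n\}_{n \in \NN}$ is a Riesz sequence in $\H$ with bounds $A, B > 0$, so that
\begin{eqnarray*}
A \sumi |q_n|^2 \le \left\|\sumi x_n q_n\right\|^2 \le B \sumi |q_n|^2
\end{eqnarray*}
for every eventually zero sequence $\{q_n\}_{n\in\NN} \subset \HH$. Let $\{x_{n_k}\}_{k\in\NN}$ be any subfamily, indexed by a strictly increasing map $k \mapsto n_k$ from $\NN$ (or a subset of $\NN$) into $\NN$. I want to show that the same constants $A$ and $B$ serve as Riesz bounds for $\{x_{n_k}\}_{k \in \NN}$.

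The key step is a lifting of scalar sequences. Given an eventually zero sequence $\{p_k\}_{k\in\NN}$ of quaternions, define an eventually zero sequence $\{q_n\}_{n\in\NN}$ by setting $q_{n_k} = p_k$ for each $k$ and $q_n = 0$ for all indices $n$ not in the range of $k \mapsto n_k$. Then plainly
\begin{eqnarray*}
\sum_{k\in\NN} |p_k|^2 = \sumi |q_n|^2 \quad \text{and} \quad \sum_{k\in\NN} x_{n_k} p_k = \sumi x_n q_n,
\end{eqnarray*}
so the Riesz inequalities for $X$ applied to $\{q_n\}$ immediately yield
\begin{eqnarray*}
A \sum_{k\in\NN} |p_k|^2 \le \left\| \sum_{k\in\NN} x_{n_k} p_k \right\|^2 \le B \sum_{k\in\NN} |p_k|^2,
\end{eqnarray*}
which shows $\{x_{n_k}\}_{k\in\NN}$ is a Riesz sequence with bounds $A, B$.

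There is no real obstacle here; the only thing to be careful about is bookkeeping with the indexing, making sure that (i) the extension by zero of an eventually zero sequence is still eventually zero, so that the hypothesis applies, and (ii) the finite sums involved genuinely agree term-by-term after reindexing, which they do since only the nonzero terms (those with $n = n_k$) contribute on either side. The result is essentially an immediate consequence of Definition of Riesz sequence, so the proof itself will be very short.
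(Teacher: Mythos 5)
Your zero-extension argument is correct and is precisely the ``straightforward'' argument the paper has in mind (the paper gives no details, leaving the proof as immediate from the definition of a Riesz sequence). Extending an eventually zero sequence by zeros off the subfamily's index set and noting that the sums and the $\ell_2$-norms of the coefficients are unchanged yields the same bounds $A$ and $B$, exactly as you wrote.
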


\begin{proof}
	Straight forward.
\end{proof}

\end{document}